\documentclass[12pt]{amsart}

\usepackage[utf8]{inputenc}
\usepackage[T1]{fontenc}
\usepackage{graphicx}
\usepackage{mathtools, bm}
\usepackage{amssymb, bm}
\usepackage{amsthm}
\usepackage{amsfonts}
\usepackage{amsmath}
\usepackage{stmaryrd}
\usepackage{hyperref, enumerate}
\usepackage{setspace}
\usepackage{dsfont}
\usepackage{array, color}
\usepackage{fancybox}
\usepackage{marvosym}
\usepackage{wasysym}
\usepackage{soul}
\usepackage{tikz-cd}
\usepackage{hhline, bm}
\usepackage{mathrsfs}
\usepackage{enumitem}
\usepackage{comment}
\usepackage{leftidx}

\theoremstyle{definition}\newtheorem{definition}{Definition}[section]
\newtheorem{theorem}{Theorem}[section]
\newtheorem{proposition}[theorem]{Proposition}

\newtheorem{corollary}[theorem]{Corollary}
\newtheorem{remark}[theorem]{Remark}

\newcommand{\function}[5]{\begin{array}{lrcl}
 #1 : & #2 & \longrightarrow & #3 \\
    & #4 & \longmapsto & #5 \end{array}}
\newcommand{\Z}{\mathbb{Z}}

\newcommand{\R}{\mathbb{R}}
\newcommand{\som}[3]{\underset{#1}{\overset{#2}{\sum}} #3}

\newcommand{\free}[1]{[ #1 ]}
\newcommand{\scalar}{\langle \cdot ,\cdot \rangle}
\title{A Combinatorial Approach to Voronoï Theory}
\author{Romain Ménabé}
\address{Romain Ménabé, Univ. Grenoble Alpes, CNRS, IF, 38000 Grenoble, France. \textit{E-mail adress :} \href{mailto:romain.menabe@univ-grenoble-alpes.fr}
{\texttt{romain.menabe@univ-grenoble-alpes.fr}}
}
\begin{document}
\maketitle
\begin{abstract}
This article introduces crystallographic wraiths, new combinatorial invariants attached to Euclidean lattices and linked to the geometry of Voronoï cells. They are based on geometric properties of shortest linear dependencies among relevant Voronoï vectors.\\
Moreover, it explores the new theory of crystallographic wraiths, proves that only finitely many occur in each dimension, and extends the construction to a larger class of combinatorial objects. As an application, it provides a new combinatorial proof of Voronoï's finiteness theorem for perfect lattices\footnotemark. 
\end{abstract}
\footnotetext{Keywords : Euclidean lattice, perfect form, perfect lattice, wraith. \qquad \qquad \quad Math. class : 11H06, 51E20, 05B45}

\section{Introduction}
A Euclidean lattice is a free abelian group $\Lambda$ of finite rank, together with a positive definite symmetric bilinear form $\scalar$ on $\Lambda \otimes_\Z \R$.\\
For the rest of this article, let $(\Lambda,\scalar)$ be a Euclidean lattice, and denote by $\mathbb{E}=\Lambda \otimes_{\Z} \R$ the associated Euclidean space of dimension $d = rk(\Lambda)$. We denote by $Q$ the quadratic form associated to $\scalar$.\\
Most of the information about the geometry of a lattice is given by a domain associated with the work of Voronoï in \cite{Vor1}.\\
Given a lattice vector $g \in \Lambda$, one defines the \emph{Voronoï cell associated with $g$} as$$
V_{\Lambda}(g)=\left\{x\in \mathbb{E}\;\middle|\;Q(x-g)\leq Q(x-h)\text{ for all } h \in \Lambda\right\}.$$
Thus, $V_{\Lambda}(g)$ is the set of all elements of $E$ closer to $g$ than to any other lattice vector.\\
Since two Voronoï cells differ by a lattice vector, we will focus on the cell at the origin, given by 

$$V_{\Lambda}=V_{\Lambda}(0)=\left\{x\in \mathbb{E}\;\middle|\;\langle x,g \rangle \leq \frac{1}{2} \langle g,g \rangle \text{ for all } g \in \Lambda \right\}.$$
The polytope $V_{\Lambda}$ is a fundamental domain for $\Lambda$ acting by translations on $\mathbb{E}$.
A lattice vector $f \in \Lambda$ is called a \emph{facet vector} if the hyperplane 
$$H_f=\left\{x\in \mathbb{E}\;\middle|\;\langle x,f \rangle=\frac{1}{2} \langle f,f \rangle \right\}$$
contains a facet of the Voronoï cell\footnote{Several authors, such as Conway and Sloane, often call these vectors \emph{relevant Voronoï vectors} or \emph{strict Voronoï vectors} (see for example \cite{ConSloBook})}.\\
Since $V_{\Lambda}=-V_{\Lambda}$ is centrally symmetric, facet vectors come in opposite pairs.\\ Thus, we denote by $\mathcal{F}_{\Lambda}$ a set of representatives of all pairs, and therefore $\pm \mathcal{F}_{\Lambda}$ represents the set of all facet vectors.\\
Facet vectors are canonical generators and contain essential information about the combinatorics of the lattice tiling $\mathcal{V}_{\Lambda} + \Lambda$. They are at the core of this work.\\
The following theorem, due to Voronoï and later introduced to lattice theory by B.B. Venkov, characterizes facet vectors.

\begin{theorem}[\cite{Vor1} p.277,\cite{Ven1},\cite{ConSloBook} p.477]
\label{Venkov}
~\\A nonzero vector $ f \in \Lambda $ is a facet vector if and only if $\pm f$ are the only minimal norm vectors in the class $f+2\Lambda$.
\end{theorem}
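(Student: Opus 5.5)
The plan is to argue with the midpoint $f/2$ and the Voronoï cell, and the key identity is
$$Q(x-h)-Q(x) = Q(h)-2\langle x,h\rangle ,$$
so that $x\in H_h$ exactly when $Q(x)=Q(x-h)$. Take $x=f/2$ and any $g\in f+2\Lambda$, written as $g=f-2h$ with $h\in\Lambda$. Then $x-h = g/2$, so
$$Q(f/2-h)=\tfrac14 Q(g).$$
Hence the distances from $f/2$ to lattice points are exactly $\tfrac12\sqrt{Q(g)}$ for $g$ running over the class $f+2\Lambda$. Two consequences follow:
\begin{itemize}
\item $f/2\in V_\Lambda$ if and only if $\pm f$ have minimal norm in $f+2\Lambda$;
\item the lattice points nearest to $f/2$ are $h=(f-g)/2$ with $g$ of minimal norm in the class.
\end{itemize}
In particular, $\pm f$ are the unique minimal vectors of the class exactly when $0$ and $f$ are the only lattice points nearest to $f/2$.

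For ($\Leftarrow$), assume $\pm f$ are the only minimal vectors of $f+2\Lambda$. Then $f/2$ lies in $V_\Lambda$, and the inequality $Q(f/2)<Q(f/2-h)$ is strict for every $h\neq 0,f$. We need this strictness uniformly, i.e.\ a small ball around $f/2$ on which all constraints $h\neq 0,f$ are still strict. This holds because $\Lambda$ is discrete: only finitely many $h$ have $Q(f/2-h)$ below any fixed bound, and all others stay strict by a uniform margin. So near $f/2$, the cell $V_\Lambda$ is cut out by the single inequality $\langle x,f\rangle\le \tfrac12 Q(f)$ (the constraint from $-f$ is slack there). Therefore $H_f\cap V_\Lambda$ contains a $(d-1)$-dimensional neighbourhood of $f/2$ in $H_f$, and $f$ is a facet vector.

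For ($\Rightarrow$), assume $H_f\cap V_\Lambda$ is a facet $F$. First, $F$ is centrally symmetric about $f/2$: the map $x\mapsto f-x$ is the composition of $x\mapsto -x$ and translation by $f$. It preserves the tiling, swaps $V_\Lambda(0)$ and $V_\Lambda(f)$, and hence preserves $V_\Lambda\cap V_\Lambda(f)\supseteq F$. It also preserves $H_f$, and since $F$ is the whole intersection $V_\Lambda\cap V_\Lambda(f)$ (two tiles meet in at most that face), $F$ is stable under it. By convexity, $f/2\in F\subseteq V_\Lambda$, so $\pm f$ are minimal in the class. Now suppose some other minimal $g=f-2h$ exists, with $h\ne 0,f$. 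Then $f/2$ is equidistant from $0$, $f$ and $h$, so $f/2\in H_h$. Being in $V_\Lambda$ as well, the linear function $\langle x,h\rangle-\tfrac12 Q(h)$ is $\le 0$ on $F$ and vanishes at $f/2$. Since $f/2$ is the centre of symmetry of $F$ and hence lies in its relative interior, this function must vanish on all of $F$. Then $F\subseteq H_h\cap H_f$. Because $h\ne f$ and $h\ne 0$, the hyperplanes differ, so $F$ has dimension at most $d-2$, a contradiction.

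The main obstacle is the symmetry step in ($\Rightarrow$), namely showing rigorously that $f/2$ lies in the relative interior of $F$. I would handle it via the involution $x\mapsto f-x$, identifying $F=V_\Lambda\cap V_\Lambda(f)$ directly: points of $H_f$ in $V_\Lambda$ are equidistant to $0$ and $f$ and closest to both, so $F=V_\Lambda\cap V_\Lambda(f)$, which is manifestly invariant. The relative interior of a convex set is preserved by an affine involution, and averaging $x$ with $f-x$ for $x$ in the relative interior gives $f/2$ in the relative interior.
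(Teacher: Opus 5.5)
Your argument is correct and complete. The paper gives no proof of this theorem; it is quoted from Voronoï, Venkov and Conway--Sloane. So the natural comparison is with the standard argument in those references.

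Your ($\Leftarrow$) direction is exactly that argument: $f/2$ is strictly closer to $0$ and $f$ than to every other lattice point. By discreteness it is therefore a relative interior point of a $(d-1)$-dimensional piece of $H_f\cap V_\Lambda$.

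Your ($\Rightarrow$) direction is organized differently. The standard proof takes $g\in f+2\Lambda$ with $g\neq\pm f$ and $Q(g)\le Q(f)$, and sets $s=(f+g)/2$, $t=(f-g)/2\in\Lambda\setminus\{0\}$. For every $x\in V_\Lambda$ it then notes $\langle x,f\rangle=\langle x,s\rangle+\langle x,t\rangle\le\tfrac12\bigl(Q(s)+Q(t)\bigr)=\tfrac14\bigl(Q(f)+Q(g)\bigr)\le\tfrac12Q(f)$. Hence $V_\Lambda\cap H_f$ is empty or lies in $H_s\cap H_t$. This is one inequality on the whole cell, handles $Q(g)<Q(f)$ and $Q(g)=Q(f)$ at once, and uses no facts about faces or relative interiors.

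You instead first show that $F=V_\Lambda\cap V_\Lambda(f)$ is centrally symmetric about $f/2$, which is a structural fact the standard proof does not give. You then get minimality of $\pm f$ from $f/2\in F$, and use only the constraint attached to $h=t$.

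The two proofs are closer than they look. For $y\in H_f$ one has $\langle f-y,t\rangle-\tfrac12Q(t)=\langle y,s\rangle-\tfrac12Q(s)$, so your involution $y\mapsto f-y$ turns the $t$-constraint into the $s$-constraint. This also means your relative-interior step can be skipped. With $\phi(x)=\langle x,h\rangle-\tfrac12Q(h)$, for any $y\in F$ you have $\phi(y)+\phi(f-y)=2\phi(f/2)=0$ with both terms $\le 0$. That already forces $\phi\equiv 0$ on $F$.
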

Since $\mathbb{F}_2^*$ is trivial, projective spaces over $\mathbb{F}_2$ can be identified with sets of nonzero elements of $\mathbb{F}_2$-vector spaces.
Theorem \ref{Venkov} connects the geometry of the Voronoï cell to the projective space $\Lambda/2\Lambda \backslash\{0\}$ of dimension $d-1$ over the field $\mathbb{F}_2$ of two elements.\\
In Section \ref{S2}, we explore this connection and use it to introduce combinatorial objects associated to Euclidean lattices, encoding shortest linear dependencies between facet vectors related to a subset of lines in the projective space $\Lambda/2\Lambda\backslash\{0\}$. We call them crystallographic wraiths.\\
In Section \ref{S3}, we generalize crystallographic wraiths further, building on the results obtained in Section \ref{S2}.\\
In Section \ref{S4}, we give a new proof of Voronoï's finiteness result for perfect lattices based on wraiths.

\section{Crystallographic wraiths}
\label{S2}
The goal of this section is to construct crystallographic wraiths, combinatorial invariants associated to Euclidean lattices. There are finitely many of them in a given dimension.

The motivation for the work developed below comes from a fact demonstrated later in this section, stating that the shortest dependencies between relevant Voronoï vectors generate the kernel of a natural morphism.\\

We fix a set $\mathcal{F}=\mathcal{F}_{\Lambda}$ of representatives for pairs of facet vectors. Elements of $\mathcal{F}$ are called \emph{favors} (\textbf{FA}cet \textbf{V}ectors with \textbf{OR}ientation). The choice of $\mathcal{F}$ matters for the construction only up to a notion of isomorphism.\\

Given a set $\mathcal{E}$, we denote by $\Z[\mathcal{E}]$ the free abelian group over $\mathcal{E}$.\\
Then, we denote by $\Z\free{\mathcal{F}}$ the standard Euclidean lattice with orthonormal basis $([f])_{f \in \mathcal{F}}$ given by a set of favors representing all pairs of opposite facet vectors. Therefore, every element of $\Z\free{\mathcal{F}}$ has the form $\som{f \in \mathcal{F}}{}a_f [f]$.\\

There is a natural morphism $\pi=\pi_{\mathcal{F}}$ from $\Z \free{\mathcal{F}}$ to $\Lambda$ as follows :
$$\function{\pi}{\Z\free{\mathcal{F}}}{\Lambda}{\som{f \in \mathcal{F}}{}a_f [f]}{\som{f \in \mathcal{F}}{}a_f f}.$$

Since Voronoï cells tile $\mathbb{E}$, facet vectors generate $\Lambda$ and $\pi$ is surjective. \\

We now study the kernel of this map. One might see it as the set of linear dependencies among facet vectors. Within those dependencies, we will focus on the simplest nontrivial ones.
\begin{definition}
Let $f_k, f_l$ and $f_m$ three elements of $\mathcal{F}$ and $\epsilon_k, \epsilon_l$ and $\epsilon_m$ in $\{\pm 1\}$. An element of the form
$$\varepsilon_{k}[f_{k}]+\varepsilon_{\ell}[f_{\ell}]+\varepsilon_{m}[f_{m}]$$
in $\ker(\pi)$ is called a \textit{line relation}.
\end{definition}
Just as facet vectors, line relations come in opposite pairs.\\
We denote by $\mathcal{L}_{\mathcal{F}}$ a set of representatives for pairs of opposite line relations. Hence, $\pm \mathcal{L}_{\mathcal{F}}$ is the set of line relations.\\
There is a natural map from the free abelian group generated by line relations $\Z \free{ \mathcal{L}_{\mathcal{F}}}$ into $\Z \free{\mathcal{F}}$.\\

Applying Theorem \ref{Venkov}, we obtain a bijection between $\mathcal{F}$ and a subset $\overline{\mathcal{F}}$ of the projective space $(\Lambda/2\Lambda) \backslash \{0\}$.\\
This projective point of view is intimately linked with the viewpoint used by Conway and Sloane in their algorithm of reduction of three dimensional lattices (\cite{ConSlo1}). However, unlike them, we only consider the subset of $\Lambda/2\Lambda\backslash \{0\}$ corresponding to facet vectors.\\
Moreover, the injectivity of the restriction to $\mathcal{F}$ of the projection $\overline{\pi} : \Z\free{\mathcal{F}} \longrightarrow \Lambda/2\Lambda$ implies that $\ker(\pi)$ has no element of norm 1 or 2.\\
Two line relations are therefore equal up to sign or involve at most one common basis element.\\
Furthermore, each line relation of $\mathcal{L}_{\mathcal{F}}$ gives rise to three elements $f_k$, $f_l$ and $f_m$ associated to a line (three aligned points) in the projective space $\Lambda/2\Lambda\backslash \{0\}$. This last set will be denoted $\mathcal{L}_{\overline{\mathcal{F}}}$, and its elements will be called the \emph{kernel lines}.\\
Since line relations are minimal norm vectors in $\Z\free{\mathcal{F}}$, there is a one-to-one correspondance between the set $\mathcal{L}_{\mathcal{F}}$ of line relations and the set $\mathcal{L}_{\overline{\mathcal{F}}}$ of kernel lines.

The following theorem shows the relevance of line relations.

\begin{theorem}
\label{exseq}
The sequence
$$\Z \free{\mathcal{L}_{\mathcal{F}}} \overset{\iota}{\longrightarrow} \Z \free{\mathcal{F}} \overset{\pi}{\longrightarrow} \Lambda \longrightarrow 0$$ is exact.\\
In particular, the natural homomorphism $\iota : \Z\free{\mathcal{L}_{\mathcal{F}}} \longrightarrow \Z\free{\mathcal{F}}$ surjects onto $\ker(\pi)$ and $\ker(\pi)=\langle \mathcal{L}_{\mathcal{F}} \rangle$ is the subgroup of $\Z\free{\mathcal{F}}$ generated by line relations.
\end{theorem}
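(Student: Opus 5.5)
Surjectivity of $\pi$ is already established, and $\pi\circ\iota=0$ by the definition of line relations. So the only thing to prove is $\ker(\pi)\subseteq\langle\mathcal{L}_{\mathcal{F}}\rangle$. I would argue by a reduction procedure on $\Z\free{\mathcal{F}}$: every element is reduced modulo $\langle \mathcal{L}_{\mathcal{F}}\rangle$ to a canonical representative, and the process is controlled by Theorem \ref{Venkov}.

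The key step is a \emph{reduction lemma}. Let $v\in\Lambda$ be nonzero and not a facet vector. Choose $u$ of minimal norm in the class $v+2\Lambda$. By Theorem \ref{Venkov}, either $u$ is $\pm$ a facet vector, or the class $v+2\Lambda$ contains two minimal vectors $u\neq\pm u'$. In the latter case $v$ can be written as $a+b$ with $a,b\in\Lambda$ and $Q(a),Q(b)<Q(v)$; one takes $a,b$ to be $(u\pm u')/2$ corrected by half the even difference $v-u$. The standard way to get this is Voronoï's argument: $v$ not relevant means $v/2$ lies outside the relative interior of a facet of $V_\Lambda$, so there is $w\in\Lambda\setminus\{0,v\}$ with $Q(v/2-w)\le Q(v/2)$, and then $v=w+(v-w)$ with $Q(w)+Q(v-w)\le Q(v)$ and both summands strictly shorter. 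I would also apply this to facet vectors themselves. If $f$ is a facet vector and $f=a+b$ with $a,b$ facet vectors of the same parity type as a line, then $[f]-[a]-[b]$ is a line relation, up to signs.

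With this in hand, I would define a \emph{norm filtration} and argue by induction. For each $v\in\Lambda$ I fix a chosen lift $s(v)\in\Z\free{\mathcal{F}}$, built recursively: $s(\pm f)=\pm[f]$ for facet vectors, and $s(v)=s(a)+s(b)$ using the decomposition above. Given $x=\sum a_f[f]\in\ker\pi$, I would show that $x\equiv s(\pi(x))=0$ modulo line relations. It suffices to show that $[f]+[g]\equiv s(f+g)$ for all facet vectors $f,g$, and that the lift $s(v)$ is well defined modulo $\langle\mathcal{L}_{\mathcal{F}}\rangle$ independently of the decomposition chosen. Both claims are proved by induction on $Q(v)$, or, since infinitely many norms occur, by induction along the discrete set of norms of lattice vectors.

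The main obstacle is the well-definedness claim. Suppose $v=a+b=c+d$ are two shortening decompositions. I need $s(a)+s(b)\equiv s(c)+s(d)$, and this is where the shape of a line relation, with three terms, must enter. My first attempt would use the parallelogram/Voronoï geometry: the vectors $a,b,c,d$ all lie in the "Voronoï ball" condition relative to $v/2$. I would connect the two decompositions through a chain of decompositions $v=w_i+(v-w_i)$, where consecutive $w_i,w_{i+1}$ differ by a short vector, so that each elementary move becomes a three-term relation among strictly shorter vectors. Such a three-term relation is handled by the induction hypothesis and, at the bottom level, by an honest line relation among facet vectors. Connectivity of the chain would come from the convexity of the set $\{w: Q(v/2-w)\le Q(v/2)\}$ together with the fact that facet vectors generate. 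If this gets delicate, the fallback is a topological argument: $\ker\pi$ is the relation module of the presentation of $\Lambda$ by facet vectors, and relations are generated by the 2-cells of the Voronoï tiling. These 2-cells correspond to codimension-2 faces of $V_\Lambda$, whose belts have length 4 or 6. A 6-belt gives relations of the form $f_1+f_2=f_3$, which are line relations. A 4-belt gives $f_1+f_2=f_3+f_4$ (i.e. $f_1-f_3=f_4-f_2$), which must be shown to be a sum of line relations, for instance via the fact that $f_1-f_3$ is then a non-facet vector of even class relation. I expect this last point to be the delicate one.
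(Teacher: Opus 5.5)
There is a genuine gap: neither of your two routes actually proves $\ker(\pi)\subseteq\langle\mathcal{L}_{\mathcal{F}}\rangle$.

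In the inductive route, ``it suffices to show $[f]+[g]\equiv s(f+g)$ for facet vectors $f,g$'' is not a valid reduction. Peeling one term off $x=y+[f]$ requires $s(w)+[f]\equiv s(w+f)$ for \emph{arbitrary} $w\in\Lambda$. That means $s$ must induce a homomorphism $\Lambda\to\Z\free{\mathcal{F}}/\langle\mathcal{L}_{\mathcal{F}}\rangle$, which is equivalent to the theorem itself.

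The well-definedness you call the main obstacle is actually the easy part. If $Q(w)\le\langle v,w\rangle$ and $Q(w')\le\langle v,w'\rangle$, then $w,w'$ lie in the ball with diameter $[0,v]$. Hence $Q(w'-w)<Q(v)$ unless the two decompositions coincide, and one use of additivity below $Q(v)$ compares them; no chain is needed.

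The real difficulty lies in triples $c=a+b$ with $Q(a),Q(b)<Q(c)$ that are not of this form, in particular when $c$ is a facet vector and $a$ or $b$ is not. This does happen: in the lattice of $\R^3$ generated by $\Z^2\times\{0\}$ and $c=(0.98,0.98,1)$, take $a=(1,1,0)$ and $b=c-a$. Here $c$ is a facet vector, $a$ is not, and $Q(a)=2$, $Q(b)\approx 1$, $Q(c)\approx 2.92$. Additivity forces you to prove $[c]\equiv s(a)+s(b)$. But $s(c)=[c]$ is not defined through any decomposition, and the ball around $c/2$ meets $\Lambda$ only in $0$ and $c$, so ``connectivity by convexity'' has nothing to connect. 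Your proposal contains no mechanism that produces the needed chain of line relations through intermediate facet vectors.

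The missing input is topological. That is the paper's proof and also your fallback, but in the fallback you simply assert the step that carries the whole theorem. The claim that relations are generated by the $2$-cells dual to $(d-2)$-faces has to be proved, as follows.
\begin{itemize}
\item Order the summands of $x\in\ker(\pi)$ into a closed path $p_0,\dots,p_k=p_0$ of lattice points with each $p_{i+1}-p_i$ a facet vector. Each segment $[p_i,p_{i+1}]$ lies in $V(p_i)\cup V(p_{i+1})$ and crosses their common facet at its midpoint.
\item Fill this loop with a disc, and use general position so that the disc avoids all faces of codimension $\ge 3$ and meets the $(d-2)$-faces transversally in finitely many points.
\item The boundary word is then a product of conjugates of the small loops around those points. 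Abelianizing writes $x$ as a signed sum of them.
\end{itemize}

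The four-cell case you expect to be delicate is in fact trivial. Suppose a $(d-2)$-face $F\subset H_f\cap H_g$ lies in four cells, the fourth being $V(h)$. Since the belt of $F$ is $\{\pm f,\pm g\}$, we have $h-f=\pm g$. The option $h=f-g$ would give $\langle f-g,g\rangle=0$, so $f$ and $f-2g$ would be distinct vectors of equal norm in $f+2\Lambda$, contradicting Theorem~\ref{Venkov}. Hence the four cells are translates of $V(0),V(f),V(f+g),V(g)$ with $\langle f,g\rangle=0$. The loop around $F$ reads $[f]+[g]-[f]-[g]=0$ in the abelian group $\Z\free{\mathcal{F}}$. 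It is a commutation relation that vanishes, not an identity $f_1+f_2=f_3+f_4$ that needs to be decomposed into line relations.
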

\textit{Sketch of proof :}\\
The essential part here is to prove that $\ker(\pi)$ is generated by line relations.\\
After ordering its summands, an element in $\ker(\pi)$ can be seen as a piecewise linear loop in $\mathbb{E}=\Lambda \otimes_Z \R$. We fill this loop with 2-simplices (triangles) obtaining essentially a topological disc (up to self-intersection).\\
Thanks to the General Position Theorem (see \cite{RS}), a generic perturbation of this disc does not intersect faces of codimension greater than 2 of the Voronoï tesselation. Then, using a Theorem in \cite{MAG}, a $(d-2)$-face is the intersection of either 3 or 4 Voronoï cells. The intersection of three Voronoï cells corresponds to a line relation, while the intersection of four Voronoï cells can be described as a "rectangle", encoding two pairs of orthogonal facet vectors. The orientation of the space gives us a decomposition into line relations of each triangle, and thus of the element of the kernel. Intersections of four Voronoï cellscorrespond to commutation-relations and can be neglected.

This theorem admits an interpretation in terms of geometric group theory. One associates to a Euclidean lattice the presentation of a free abelian group with generators $\pm \mathcal{F}$ and relations $\mathcal{L}_{\mathcal{F}}$. With this point of view, one can remark that, during the proof, we construct the Van Kampen diagram of words corresponding to elements in the kernel.\\

In this proof, we used the fact that if a $(d-2)$-face of a cell is the intersection of exactly three Voronoï cells, then there exists a line relation between the facet vectors linking these Voronoï cells. However, not all line relations occur in this way. Thus, given a lattice, some line relations may never appear in Van Kampen diagrams as constructed above. Such examples exist in dimension greater than 4.\\

We notice that, using the first isomorphism theorem on $\pi$, we have that $\Z\free{\mathcal{F}}/\langle \mathcal{L}_{\mathcal{F}} \rangle$ is isomorphic to $\Lambda$. Hence, we can recover the projective structure of $\Lambda/2\Lambda\backslash \{0\}$.\\

The pair $(\mathcal{F},\mathcal{L}_{\mathcal{F}})$ therefore defines an invariant encoding a part of the combinatorial information of the tiling induced by $\Lambda$. However, the choice of signs for the elements of the set $\mathcal{F}$ of favors makes it not entirely canonical.\\

We deal with this problem by introducing a notion of isomorphism accounting for sign choices.

Let us start by recalling that morphisms of free-modules are completely determined by the images of the source generators. Moreover, if $U$ is a subset of $S$ (respectively $T$), then it corresponds to the subset of $\Z\free{S}$ (respectively $\Z\free{T}$) where each vector has only one non-zero coordinate, equal to 1, and associated to an element of $U$. By abuse of notation, we also denote by $U$ this subset of $\Z\free{S}$ (respectively $\Z\free{T}$).\\

Let now $\Lambda_1$ and $\Lambda_2$ be two lattices with chosen sets $\mathcal{F}_1$ and $\mathcal{F}_2$ of favors.\\
We say that the pairs $(\mathcal{F}_1, \mathcal{L}_{\mathcal{F}_1})$ and $(\mathcal{F}_2, \mathcal{L}_{\mathcal{F}_2})$ are \emph{isomorphic} if there exists an isomorphism $\rho : \Z\free{\mathcal{F}_1} \longrightarrow \Z\free{\mathcal{F}_2}$ such that  $\rho(\pm \mathcal{F}_1)=\pm \mathcal{F}_2$ and $\rho(\pm \mathcal{L}_{\mathcal{F}_1})=\pm \mathcal{L}_{\mathcal{F}_2}$.\\

$\rho$ induces a projective bijection between $\Lambda_1/2\Lambda_1 \backslash \{0\}$ and $\Lambda_2/2\Lambda_2 \backslash \{0\}$ sending the points $\overline{\pi_{\mathcal{F}_1}}(\mathcal{F}_1)$ bijectively onto the points $\overline{\pi_{\mathcal{F}_2}}(\mathcal{F}_2)$ and the lines $\overline{\pi_{\mathcal{F}_1}}(\mathcal{L}_{\mathcal{F}_1})$ bijectively onto the lines $\overline{\pi_{\mathcal{F}_2}}(\mathcal{L}_{\mathcal{F}_2})$.\\
In other words, one would say that such an isomorphism $\rho$ establishes a bijection between the projective structures associated to the lattices.

One can also immediately notice that the choice of two different sets of favors at the beginning of the construction will result in the same class for this relation.\\

This leads to the following definition.
\begin{definition}
\emph{The $d$-dimensional crystallographic wraith associated to $\Lambda$} is the isomorphism class of any pair of the form ($\mathcal{F}$, $\mathcal{L}_{\mathcal{F}}$) where $\mathcal{F}$ is a set of favors of $\Lambda$.
\end{definition}

It also follows immediately from the definitions and the construction above that two lattices with the same combinatorial Voronoï cell have the same crystallographic wraith. An example in dimension 4 proves that the converse of this fact is not true.\\
To summarize, the subdivision of space obtained through combinatorics of Voronoï cells is finer than the one obtained with crystallographic wraiths.

One can remark that this subdivision of space was also obtained with different tools by Ryshkov and Baranovskii in \cite{RandB}.
However, they did not remark the analogies with the geometry of projective spaces and the construction through line relations leading to the notion of crystallographic wraith.\\

Euclidean lattices admit a natural notion of decomposability. More precisely, a Euclidean lattice is decomposable if it is the orthogonal sum of two nontrivial Euclidean sublattices.\\
We extend this notion to crystallographic wraiths. 
\begin{proposition}
If $(\mathcal{F}_1,\mathcal{L}_{\mathcal{F}_1})$ is a $d_1$-dimensional crystallographic wraith and $(\mathcal{F}_2,\mathcal{L}_{\mathcal{F}_2})$ is a $d_2$-dimensional crystallographic wraith, then $(\mathcal{F}_1 \cup \mathcal{F}_2,\mathcal{L}_{\mathcal{F}_1} \cup \mathcal{L}_{\mathcal{F}_2})$ is a $(d_1+d_2)$-dimensional crystallographic wraith.
A wraith of this form is called \emph{decomposable}.
\end{proposition}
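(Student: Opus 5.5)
Take lattices $\Lambda_1$ and $\Lambda_2$ of ranks $d_1$ and $d_2$ realizing the two wraiths, with favor sets $\mathcal{F}_1$ and $\mathcal{F}_2$. I will show that the orthogonal sum $\Lambda=\Lambda_1\perp\Lambda_2$, with favor set $\mathcal{F}_1\cup\mathcal{F}_2$ (where $\mathcal{F}_i$ is viewed inside $\Lambda$ through the inclusion $\Lambda_i\subset\Lambda$), has line relations exactly $\mathcal{L}_{\mathcal{F}_1}\cup\mathcal{L}_{\mathcal{F}_2}$. The wraith of $\Lambda$ is then the class of $(\mathcal{F}_1\cup\mathcal{F}_2,\mathcal{L}_{\mathcal{F}_1}\cup\mathcal{L}_{\mathcal{F}_2})$. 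Since the isomorphism class does not depend on the choice of favors, the choice of representatives made here is harmless.

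\textbf{Step 1: the facet vectors of $\Lambda$ are $\pm\mathcal{F}_1\cup\pm\mathcal{F}_2$.} Use Theorem \ref{Venkov}. Every $g\in\Lambda$ decomposes uniquely as $g=g_1+g_2$ with $g_i\in\Lambda_i$, and $\Lambda/2\Lambda=\Lambda_1/2\Lambda_1\oplus\Lambda_2/2\Lambda_2$. Since $Q(g)=Q(g_1)+Q(g_2)$, the minimum of $Q$ on the class $g+2\Lambda$ is attained exactly on the sums $h_1+h_2$ with each $h_i$ of minimal norm in $g_i+2\Lambda_i$.
\begin{itemize}
\item Suppose $g_1\notin2\Lambda_1$ and $g_2\notin2\Lambda_2$. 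Each minimal set $\{h_i\}$ has at least the two elements $\pm h_i$, because $-h_i\in g_i+2\Lambda_i$. So the class $g+2\Lambda$ contains at least four minimal vectors $\pm h_1\pm h_2$, and $g$ is not a facet vector.
\item If $g_2\in2\Lambda_2$, the minimal vectors of the class have second component $0$. Hence $g$ is a facet vector exactly when $g\in\Lambda_1$ and $g$ is a facet vector of $\Lambda_1$. The case $g_1\in2\Lambda_1$ is symmetric.
\end{itemize}
Equivalently, one could note that $V_\Lambda=V_{\Lambda_1}\times V_{\Lambda_2}$, whose facets are products of a facet of one factor with the whole other factor.

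\textbf{Step 2: the line relations of $\Lambda$ are $\mathcal{L}_{\mathcal{F}_1}\cup\mathcal{L}_{\mathcal{F}_2}$.} Clearly $\Z\free{\mathcal{F}_1\cup\mathcal{F}_2}=\Z\free{\mathcal{F}_1}\oplus\Z\free{\mathcal{F}_2}$ and $\pi=\pi_1\oplus\pi_2$, so $\ker\pi=\ker\pi_1\oplus\ker\pi_2$. Every line relation of $\Lambda_i$ is therefore still one in $\Lambda$. Conversely, take a line relation $\varepsilon_k[f_k]+\varepsilon_\ell[f_\ell]+\varepsilon_m[f_m]$ of $\Lambda$ and project it onto the two summands. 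Each projection lies in the corresponding kernel and involves at most three basis elements.
\begin{itemize}
\item If the three favors are split between $\mathcal{F}_1$ and $\mathcal{F}_2$, one projection is a nonzero kernel element of norm $1$ or $2$.
\item This is impossible, by the remark that $\ker(\pi_i)$ contains no element of norm $1$ or $2$.
\end{itemize}
So all three favors lie in the same $\mathcal{F}_i$, and the relation is a line relation of $\Lambda_i$. Fixing representatives, this gives $\mathcal{L}_{\mathcal{F}}=\mathcal{L}_{\mathcal{F}_1}\cup\mathcal{L}_{\mathcal{F}_2}$.

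\textbf{Main obstacle.} The only real work is in Step 1, in the mixed case where both components are nonzero modulo $2$: one must check that the minimal vectors of such a class are never just a single pair $\pm g$. The four vectors $\pm h_1\pm h_2$ settle this. The remaining point is well-definedness: the union depends on the chosen representatives only up to isomorphism. An isomorphism $\rho_1\oplus\rho_2$ built from isomorphisms $\rho_i$ of the factor wraiths preserves $\pm(\mathcal{F}_1\cup\mathcal{F}_2)$ and $\pm(\mathcal{L}_{\mathcal{F}_1}\cup\mathcal{L}_{\mathcal{F}_2})$, so the resulting class is the same.
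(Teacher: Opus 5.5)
Your proof is correct: realizing the two wraiths by lattices $\Lambda_1,\Lambda_2$ and passing to $\Lambda_1\perp\Lambda_2$, you use Venkov's criterion to show that the facet vectors of the sum are exactly $\pm\mathcal{F}_1\cup\pm\mathcal{F}_2$, and you rule out mixed line relations because they would force a norm-$1$ element into some $\ker(\pi_i)$. The paper states this proposition without proof; the orthogonal-sum construction you carry out is the one implicit in its motivating remark about decomposable Euclidean lattices, and your argument supplies the missing details, including the well-definedness on isomorphism classes.
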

The interest of this extension of decomposability is the following.
\begin{proposition}
The decomposability of crystallographic wraiths corresponds to the decomposability of the underlying Euclidean lattices.
\end{proposition}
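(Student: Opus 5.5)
We prove the two implications separately. The easy one is that a decomposable lattice gives a decomposable wraith. The converse is that a decomposable wraith forces an orthogonal splitting of the lattice.

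\emph{Lattice $\Rightarrow$ wraith.} Suppose $\Lambda=\Lambda_1\perp\Lambda_2$. For $x=x_1+x_2$ and $g=g_1+g_2$ we have $Q(x-g)=Q(x_1-g_1)+Q(x_2-g_2)$, so $V_\Lambda=V_{\Lambda_1}\times V_{\Lambda_2}$. The facets of a product of polytopes are exactly the products of a facet of one factor with the whole other factor. Hence $\pm\mathcal{F}_\Lambda=\pm\mathcal{F}_{\Lambda_1}\sqcup\pm\mathcal{F}_{\Lambda_2}$. One can also see this from Theorem \ref{Venkov}: the class $f+2\Lambda$ has $\pm f$ as unique minima exactly when one component of $f$ lies in $2\Lambda_i$, and then that component must be $0$.

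Next take a line relation $\varepsilon_k[f_k]+\varepsilon_l[f_l]+\varepsilon_m[f_m]$. If its three favors do not all lie in the same $\mathcal{F}_{\Lambda_i}$, we project the relation to $\Lambda_1$ and to $\Lambda_2$. This produces a relation involving one or two facet vectors of one factor. Such a relation would be an element of norm $1$ or $2$ in the kernel for that factor, which was excluded earlier. So every line relation lives in one factor, and $\mathcal{L}_{\mathcal{F}}=\mathcal{L}_{\mathcal{F}_1}\cup\mathcal{L}_{\mathcal{F}_2}$, with $\mathcal{F}_i=\mathcal{F}_{\Lambda_i}$. This is precisely the decomposable form given by the preceding proposition.

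\emph{Wraith $\Rightarrow$ lattice.} Suppose the wraith of $\Lambda$ is isomorphic to a pair $(\mathcal{F}_1\cup\mathcal{F}_2,\mathcal{L}_1\cup\mathcal{L}_2)$ with both parts nonempty. Transporting through $\rho$, we get a partition $\mathcal{F}_\Lambda=A\sqcup B$ in which every line relation is supported entirely in $A$ or entirely in $B$. Set $\Lambda_A=\pi(\Z[A])$ and $\Lambda_B=\pi(\Z[B])$.

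By Theorem \ref{exseq}, $\ker\pi$ is generated by line relations, so $\ker\pi=(\ker\pi\cap\Z[A])\oplus(\ker\pi\cap\Z[B])$. It follows that $\Lambda=\Lambda_A\oplus\Lambda_B$ as abelian groups. The ranks add correctly because the quotients $\Z[\mathcal{F}_i]/\langle\mathcal{L}_i\rangle$ recover lattices of ranks $d_1$ and $d_2$, so both summands are nontrivial.

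The remaining and main step is to show that $\Lambda_A\perp\Lambda_B$. The idea is to use $\Theta=\mathbb{E}_A\cap\mathbb{E}_B$-free vectors together with Theorem \ref{Venkov}. Take $a\in A$ and $b\in B$, and suppose $\langle a,b\rangle\neq0$, say $\langle a,b\rangle>0$ after changing the sign of $b$. Choose such a pair with $\langle a,b\rangle$ extremal, or alternatively work with the Voronoï cell directly.

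Consider the class of $a-b$ modulo $2\Lambda$. Its minimal vectors $\pm v$ have norm at most $Q(a-b)$. The goal is to show that some facet vector lies in the class $a+b+2\Lambda$ or in the class $a-b+2\Lambda$. If so, $\bar a,\bar b,\overline{a\pm b}$ would form a projective line whose three points are all in $\overline{\mathcal{F}}$.

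Here we need a lemma: if three facet vectors are pairwise congruent modulo $2\Lambda$ to points of a line, then a signed line relation exists among them. We prove it by writing $a-b=v+2w$ and running a norm-descent on $w$. Granting the lemma, the resulting line relation mixes $A$ and $B$, which contradicts the partition.

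Producing a facet vector in $a\pm b+2\Lambda$ whenever $\langle a,b\rangle\neq0$ is the step I expect to be hardest. My first approach would be the topological argument from the sketch of Theorem \ref{exseq}. Take the $2$-face of the tiling spanned by directions near $a$ and $b$, and look at a generic $(d-2)$-face of $V_\Lambda$ lying on $H_a\cap H_b$. If $H_a\cap H_b$ meets $V_\Lambda$ in codimension $2$, this face is either a triple intersection, which gives a mixed line relation, or a fourfold intersection. The fourfold case is the ``rectangle'' case, and it forces $a\perp b$.

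If $H_a\cap H_b$ does not meet $V_\Lambda$ in codimension $2$, we instead follow a path on $\partial V_\Lambda$ from the facet of $a$ to the facet of $b$. Along this path the facet vector must switch from $A$ to $B$ somewhere across a $(d-2)$-face. By the same dichotomy, this switch happens at a triple intersection, which gives a mixed line relation, or at a fourfold intersection. So the only way to avoid a mixed line relation is for every adjacent pair $a'\in A$, $b'\in B$ to be orthogonal.

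A final argument then upgrades this local orthogonality to global orthogonality. All facets adjacent across the switching locus are orthogonal, so the facets of $A$ and those of $B$ generate mutually orthogonal subspaces, and $V_\Lambda$ splits as a product. This gives $\Lambda=\Lambda_A\perp\Lambda_B$.
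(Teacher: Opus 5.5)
Your first implication (decomposable lattice gives a decomposable wraith) is correct, and so is the algebraic splitting $\Lambda=\Lambda_A\oplus\Lambda_B$. The paper states this proposition without proof, so I am judging your argument on its own. The genuine gap is the orthogonality step.

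\emph{The mod-$2$ route.} It aims at an implication that is false in general. Take the lattice with Gram matrix $\left(\begin{smallmatrix}2&-1&-1\\-1&3&-1\\-1&-1&3\end{smallmatrix}\right)$ in a basis $v_1,v_2,v_3$. Here $v_2,v_3$ are facet vectors with $\langle v_2,v_3\rangle=-1$. Yet $v_2+v_3+2\Lambda$ has two minimal pairs, $\pm(v_2+v_3)$ and $\pm(2v_1+v_2+v_3)$, so it contains no facet vector. Your auxiliary lemma is also false. In the body-centred cubic lattice $\{x\in\Z^3 : x_1\equiv x_2\equiv x_3 \pmod 2\}$, the vectors $(2,0,0),(0,2,0),(0,0,2)$ are facet vectors whose classes form a projective line, but no signed sum of them vanishes.

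\emph{The topological route.} It does prove something true: whenever $F_{a'}$ and $F_{b'}$ share a ridge with $a'\in\pm A$, $b'\in\pm B$, that ridge lies in four cells and $a'\perp b'$. Your last paragraph, however, only asserts the passage from this to $\Lambda_A\perp\Lambda_B$. Adjacency tells you nothing about $\langle a,b\rangle$ when $F_a$ and $F_b$ share no ridge. An orthogonal product splitting of $V_\Lambda$ is essentially the conclusion itself, not something that follows from the local information.

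\emph{The missing idea.} The product structure comes first, and it follows directly from the half-space description.
\begin{enumerate}
\item Put $E_A=\Lambda_A\otimes_\Z\R$ and $E_B=\Lambda_B\otimes_\Z\R$. Then $\mathbb{E}=E_A\oplus E_B$, hence $\mathbb{E}=E_B^{\perp}\oplus E_A^{\perp}$.
\item $V_\Lambda$ is cut out by the inequalities $\langle x,f\rangle\le\frac12 Q(f)$ for $f\in\pm\mathcal{F}$, and every such $f$ lies in $\Lambda_A$ or in $\Lambda_B$. Write $x=y+z$ with $y\in E_B^\perp$ and $z\in E_A^\perp$. Then $\langle x,a\rangle=\langle y,a\rangle$ for $a\in\Lambda_A$ and $\langle x,b\rangle=\langle z,b\rangle$ for $b\in\Lambda_B$.
\item Hence $V_\Lambda=Y+Z$, where $Y\subset E_B^\perp$ and $Z\subset E_A^\perp$ are bounded and centrally symmetric. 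For $a\in\pm A$ the facet is $F_a=Y_a+Z$.
\item The facet $F_a=V_\Lambda\cap(V_\Lambda+a)$ is invariant under $x\mapsto a-x$. Write $a=a_Y+a_Z$ and project onto $E_A^\perp$ along $E_B^\perp$. This gives $Z=a_Z-Z=a_Z+Z$, so $a_Z=0$ because $Z$ is bounded.
\item Thus $a\in E_B^\perp$ for every $a\in A$. Since $A$ generates $\Lambda_A$, you get $\Lambda_A\perp\Lambda_B$.
\end{enumerate}
If you prefer to keep your local argument, the product structure also shows that every $F_a$ meets every $F_b$ in the ridge $Y_a+Z_b$. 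Your three-or-four-cells dichotomy then applies to every pair $a\in\pm A$, $b\in\pm B$.
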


Before proceeding on the application to perfect lattices, we introduce, in the next section, a generalization of crystallographic wraiths.
\section{Wraiths}
\label{S3}
The goal of this section is to present a generalization of crystallographic wraiths no longer linked to Euclidean lattices.
\begin{definition}
\label{def1}
A \emph{$d$-dimensional wraith} is a pair $(\Z^n,\mathcal{R})$ where $\Z^n$ is the standard Euclidean lattice with the orthonormal basis $\{e_1,...,e_n\}$ and $\mathcal{R} \subset \Z^n$ is a subgroup with the following properties :\begin{enumerate}
\item $\mathcal{R}$ does not contain elements of (squared Euclidean) norm 1 and 2,
\item $\mathcal{R}$ is generated by all elements of (squared Euclidean) norm 3 in $(\mathcal{R} \otimes_\Z \mathbb{Q}) \cap \Z^n$,
\item The quotient group $\Gamma = \Z^n/\mathcal{R}$ is isomorphic to $\Z^d \times T$ where $T$ is a torsion group of odd order.
\end{enumerate}
\end{definition} 

\begin{remark}
There is a slightly weaker definition replacing the condition (2) by (2') : $\mathcal{R}$ is generated by elements of (squared Euclidean) norm 3 in $(\mathcal{R}\otimes_\Z \mathbb{Q}) \cap \Z^n$.\\
The stronger version of Definition \ref{def1} minimizes the size of the torsion group $T$ and ensures that $\mathcal{R}$ is determined by its linear span $(\mathcal{R} \otimes_{\Z} \mathbb{Q}) \cap \Z^n$ in $\mathbb{Q}^n=\Z^n \otimes_{\Z} \mathbb{Q}$.
\end{remark}
A wraith is \emph{torsion-free} if $\mathcal{R}$ is a full sublattice of $\Z^n$, or equivalently if the torsion group $T$ is trivial.\\

For example, crystallographic wraiths are torsion-free wraiths with $\Z^n = \Z\free{\mathcal{F}_{\Lambda}}$ and $\mathcal{R}= \langle \mathcal{L}_{\mathcal{F}_{\Lambda}} \rangle$.

\begin{remark}
Given a torsion-free wraith, the existence of an underlying Euclidean structure realizing it as a crystallographic wraith can be decided by linear programming.
\end{remark}

Two wraiths $(\Z^{n_1},\mathcal{R}_1)$ and $(\Z^{n_2},\mathcal{R}_2)$ are \emph{isomorphic} if  there exists an isometry $\phi : \Z^{n_1} \longrightarrow \Z^{n_2}$ such that $\phi(\mathcal{R}_1)=\mathcal{R}_2$.\\
In particular, this implies that $m=n$ and that the two wraiths have the same dimension.\\

This is compatible with the notion of isomorphism given for crystallographic wraiths.\\

We denote by $\pi=\pi_{(\Z^n,\mathcal{R})}$ the natural homomorphism from $\Z^n$ to $\Gamma$, and $\overline{\pi}=\overline{\pi_{(\Z^n,\mathcal{R})}}$ the morphism from $\Z^n$ to $\Gamma/2\Gamma$.\\

Observe that condition (3) is equivalent to the fact that the natural homomorphism $\overline{\pi}$ from $\Z^n$ onto $\Gamma/2\Gamma$ maps the standard basis $e_1,...,e_n$ of $\Z^n$ onto $n$ distincts nonzero elements of $\Gamma/2\Gamma$.

A feature of $\Gamma/2\Gamma$ is the following.
\begin{proposition}
\label{p31}
$\Gamma/2\Gamma$ is a vector space of dimension $d$ over the field $\mathbb{F}_2$ of 2 elements.
\end{proposition}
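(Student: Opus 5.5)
The plan is to compute $\Gamma/2\Gamma$ directly from the decomposition $\Gamma \cong \Z^d \times T$ given by condition (3) of Definition \ref{def1}. The main observation is that tensoring with $\mathbb{F}_2$ (equivalently, passing to the quotient by $2\Gamma$) commutes with finite direct products. This gives
$$\Gamma/2\Gamma \cong (\Z^d/2\Z^d) \times (T/2T) = \mathbb{F}_2^d \times (T/2T).$$

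The key step is to show that $T/2T$ is trivial, using only that $T$ is a finite group of odd order. Multiplication by $2$ on $T$ is injective: if $2t=0$, then the order of $t$ divides $2$, but it must also divide $|T|$, which is odd, so $t=0$. Since $T$ is finite, this injective map is also surjective, so $2T=T$ and $T/2T=0$. If one prefers to allow $T$ to be an infinite torsion group, the statement "$T$ is a torsion group of odd order" still means that every element has odd order. In that case one argues elementwise: if $t$ has odd order $m=2k+1$, then $t = 2(-kt)+mt = 2(-kt)$, so $t\in 2T$. Either way $2T = T$.

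It then remains to observe that $\Gamma/2\Gamma$ is an abelian group killed by $2$, hence naturally an $\mathbb{F}_2$-vector space. The isomorphism above is additive, so it is $\mathbb{F}_2$-linear, and therefore $\dim_{\mathbb{F}_2}\Gamma/2\Gamma = \dim_{\mathbb{F}_2}\mathbb{F}_2^d = d$.

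There is no real obstacle. The only point requiring care is interpreting "torsion group of odd order" so that $2$ acts invertibly on $T$, and the elementwise argument covers every reading of that phrase. This result also justifies viewing $\overline{\pi}(e_1),\dots,\overline{\pi}(e_n)$ as points of the projective space $(\Gamma/2\Gamma)\backslash\{0\}$ of dimension $d-1$, in analogy with the lattice case.
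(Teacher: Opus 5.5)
Your proposal is correct and follows essentially the same route as the paper: split $\Gamma/2\Gamma \cong \Z^d/2\Z^d \times T/2T$ and kill $T/2T$ because multiplication by $2$ is bijective on a group of odd order. You additionally justify that bijectivity (injective plus finite, or the elementwise identity $t = 2(-kt)$) and the $\mathbb{F}_2$-linearity of the isomorphism, which the paper leaves implicit.
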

\begin{proof}
Let us remark, for the following proof, that since the group of torsion $T$ is of odd order, the multiplication by 2 is a bijection on $T$.
\begin{align*}
\Gamma /2 \Gamma &\simeq (\Z^d \times  T)/2(\Z^d \times T)\\
&\simeq (\Z^d \times  T)/(2\Z^d \times 2T)\\
&\simeq \Z^d/2\Z^d \times  T/2T\\
&\simeq \Z^d/2\Z^d \quad \text{since $T$ is of odd order}
\end{align*}
\end{proof}
It follows from Proposition \ref{p31} that $\Gamma/2\Gamma\backslash\{0\}$ is the $(d-1)$-dimensional projective space over the field $\mathbb{F}_2$ of 2 elements.\\

\begin{proposition}
\label{p32}
If $r \in \{ \pm e_1 \pm e_2 \pm e_3\}$ is an element of (squared Euclidean) norm 3 in $\mathcal{R}$, then $\overline{\pi}(e_1)$, $\overline{\pi}(e_2)$ and $\overline{\pi}(e_3)$ form a projective line in $\Gamma/2\Gamma \backslash \{0\}$.\\
However, a projective line of $\Gamma/2\Gamma\backslash\{0\}$ is generally not of this form.\\
\end{proposition}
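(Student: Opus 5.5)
The first claim follows by reducing the relation $r$ modulo $2$. Write $r=\varepsilon_1 e_1+\varepsilon_2 e_2+\varepsilon_3 e_3$ with $\varepsilon_i\in\{\pm1\}$. Since $r\in\mathcal{R}=\ker(\pi)$, we have $\varepsilon_1\pi(e_1)+\varepsilon_2\pi(e_2)+\varepsilon_3\pi(e_3)=0$ in $\Gamma$. Applying the reduction $\Gamma\to\Gamma/2\Gamma$, and using $-1=1$ in $\mathbb{F}_2$, gives
$$\overline{\pi}(e_1)+\overline{\pi}(e_2)+\overline{\pi}(e_3)=0 \quad\text{in } \Gamma/2\Gamma.$$
By Proposition \ref{p31}, $\Gamma/2\Gamma$ is an $\mathbb{F}_2$-vector space. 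By the reformulation of condition (3) stated just before that proposition, the three images $\overline{\pi}(e_i)$ are nonzero and pairwise distinct. A projective line of $\Gamma/2\Gamma\backslash\{0\}$ is exactly a set $\{a,b,a+b\}$ with $a\neq b$ both nonzero, i.e. three distinct nonzero vectors summing to $0$. The three points therefore form a line.

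One could instead argue without condition (3): if $\overline{\pi}(e_1)=0$, then $e_1-2x\in\mathcal{R}$ for some $x\in\Z^n$, and one would have to show that this forces an element of norm $1$ or $2$ in $\mathcal{R}$. That is not immediate, so I will rely on the reformulation of condition (3) as stated in the paper. Making sure this injectivity and nonvanishing is legitimately available is the only delicate point of this half.

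For the second claim ("generally not of this form"), I would give an explicit example. The natural first attempt is $\Gamma=\Z^2$ with the cubic lattice: there $\mathcal{F}$ has only two elements, so the unique projective line of $\mathbb{F}_2^2\backslash\{0\}$ has one point, $(1,1)$, not coming from any $e_i$. This settles the matter if a line with points outside $\overline{\pi}(\{e_i\})$ counts. To exhibit a line all of whose points are images of basis vectors but which still carries no norm-$3$ relation, I would go to higher dimension. In dimension $3$, take the Voronoi cell of a body-centred cubic type lattice, whose $7$ facet-vector classes fill $\mathbb{F}_2^3\backslash\{0\}$. I would look for three favors whose classes are collinear but whose signed sum is never zero in $\Lambda$. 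Concretely, I would try the three coordinate vectors $2e_x,2e_y,2e_z$ of the bcc lattice $\Z^3\cup(\Z^3+\tfrac12(1,1,1))$ scaled appropriately: their classes mod $2\Lambda$ are collinear, but no signed sum of three of them vanishes.

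The main obstacle is this second step. I have to check carefully that the chosen triple really consists of facet vectors (via Theorem \ref{Venkov}), that their classes are collinear, and that no choice of signs gives a relation. If bcc fails, I would fall back on the $4$-dimensional checkerboard lattice $D_4$, where the abundance of facet vectors makes such non-kernel lines plentiful.
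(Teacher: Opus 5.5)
Your reduction modulo $2$ is the argument the paper intends; it leaves the proof to the reader. It is complete once you know that the classes $\overline{\pi}(e_i)$ are nonzero and pairwise distinct. Your suspicion about that input is justified, though. It does not follow from conditions (1)--(3) of Definition \ref{def1} as written, so the alternative route you sketch via condition (1) cannot succeed.

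Here is a counterexample. Take $\mathcal{R}=\langle r_1,r_2,r_3\rangle\subset\Z^5$ with $r_1=e_1+e_2+e_3$, $r_2=e_2+e_4+e_5$, $r_3=e_3+e_4-e_5$.
\begin{itemize}
\item A general element $ar_1+br_2+cr_3$ has norm $a^2+(a+b)^2+(a+c)^2+2b^2+2c^2$, which is at least $3$ when nonzero.
\item The quotient $\Z^5/\mathcal{R}$ is generated by the classes of $e_4,e_5$ and has rank $2$, so it is $\Z^2$.
\item Hence $\mathcal{R}$ is saturated and generated by its norm-$3$ vectors, and (1)--(3) hold with $d=2$.
\end{itemize}
Yet $r_1+r_2+r_3=e_1+2(e_2+e_3+e_4)$, so $\overline{\pi}(e_1)=0$ and $\overline{\pi}(e_2)=\overline{\pi}(e_3)$. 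The line relation $r_1$ therefore does not produce a projective line. For wraiths in the literal sense of Definition \ref{def1}, the first claim is false. The \emph{reformulation} of (3) stated before Proposition \ref{p31} is not an equivalence, and the mod-$2$ injectivity must be added as a hypothesis. You should say so explicitly rather than cite it as a consequence of (3).

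For crystallographic wraiths, the case used later, this hypothesis does hold by Theorem \ref{Venkov}. A facet vector $f$ is not in $2\Lambda$, and if $f\equiv g$ modulo $2\Lambda$ for facet vectors $f,g$, then $g=\pm f$.

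For the second claim, the square lattice already suffices. Its wraith is $(\Z^2,\{0\})$, which has no line relations at all, while $\Gamma/2\Gamma\setminus\{0\}$ is itself a projective line.

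Your bcc example also works and gives the stronger statement: a line all of whose points are classes of favors, yet which carries no relation. You must normalize consistently, though. In $\Lambda=\Z^3\cup(\Z^3+h)$ with $h=\tfrac12(1,1,1)$, use $e_x,e_y,e_z$ themselves, since $2e_x\in2\Lambda$ there. The three checks go through:
\begin{itemize}
\item They are facet vectors (the square faces of the truncated octahedron). For example, $e_x+2\Lambda$ consists of vectors of type (odd, even, even) and (even, odd, odd), and $\pm e_x$ are its only vectors of norm $1$, so Theorem \ref{Venkov} applies.
\item Since $e_x+e_y+e_z=2h\in2\Lambda$, their classes are collinear.
\item They are linearly independent, so no signed sum vanishes.
\end{itemize}
There is no need to fall back on $D_4$.
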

We leave the easy proof to the reader.\\

Proposition \ref{p32} suggests to define \emph{line relations} as elements of norm 3 in $\mathcal{R}$. We can also replace $\mathcal{R}$ by its set of line relations in the definition of wraiths.\\

The projective viewpoint developed above leads to the following geometric interpretation.

A $d$-dimensional wraith can be seen as a pair $(\mathcal{S},\mathcal{R})$ where $\mathcal{S}$ is a subset of the $(d-1)$-dimensional projective space over $\mathbb{F}_2$ and $\mathcal{R} \subset \Z\free{\mathcal{S}}$ is a subgroup of the standard Euclidean lattice $\Z\free{\mathcal{S}}$ with the orthonormal basis $\{[s_1],...,[s_n]\}_{s_i \in \mathcal{S}}$ with the following properties :\begin{enumerate}
\item $\mathcal{R}$ does not contain elements of (squared Euclidean) norm 1 and 2,
\item $\mathcal{R}$ is generated by all elements of the form $\pm [s_{i_1}]\pm [s_{i_2}]\pm [s_{i_3}]$ in $(\mathcal{R} \otimes_\Z \mathbb{Q}) \cap \Z\free{\mathcal{S}}$, which are associated to lines in the projective space,
\item The quotient group $\Gamma = \Z^n/\mathcal{R}$ is isomorphic to $\Z^d \times T$ where $T$ is a torsion group of odd order.\\
\end{enumerate}

There are finitely many wraiths in a given dimension. We give here an immediate upper bound for the number of wraiths.
\begin{proposition}
Denoting by $w_d$ the number of $d$-dimensional wraiths, we have $$w_d \le 2^{d+2^{3d}}.$$ 
\end{proposition}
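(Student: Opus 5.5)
The plan is to show that a $d$-dimensional wraith $(\Z^n,\mathcal{R})$ is determined, up to isomorphism, by a finite amount of projective data over $\mathbb{F}_2$, and then to count that data crudely.

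\textbf{Step 1: bounding $n$.} By condition (3), through the observation following the definition of isomorphism and Proposition \ref{p31}, the map $\overline{\pi}$ sends $e_1,\dots,e_n$ to $n$ distinct nonzero points of $\Gamma/2\Gamma\backslash\{0\}\simeq\mathbb{F}_2^d\backslash\{0\}$. Hence $n\le 2^d-1$. Fixing an identification $\Gamma/2\Gamma\simeq\mathbb{F}_2^d$ (over-counting is harmless for an upper bound), the basis corresponds to a subset $\mathcal{S}$ of $\mathbb{F}_2^d\backslash\{0\}$. This gives at most $2^{2^d}$ choices, and in particular $n$ is recovered from $\mathcal{S}$.

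\textbf{Step 2: $\mathcal{R}$ is determined by its line relations, with few choices per triple.} By condition (2), $\mathcal{R}$ is generated by its norm-$3$ elements $\pm e_i\pm e_j\pm e_k$. By Proposition \ref{p32}, each such element lies over a projective line $\{\overline{\pi}(e_i),\overline{\pi}(e_j),\overline{\pi}(e_k)\}$ contained in $\mathcal{S}$.

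I claim that, up to an overall sign, at most one sign pattern occurs on a given triple. Suppose two non-opposite patterns occur on the same triple. Then their sum or their difference equals $2(\pm e_i)$ or $2(\pm e_i\pm e_j)$, and this vector lies in $\mathcal{R}$. Write it as $2v$ with $v$ of norm $1$ or $2$. Then $\pi(v)$ is an element of $\Gamma\simeq\Z^d\times T$ killed by $2$. Since $T$ has odd order, $\Gamma$ has no $2$-torsion, so $\pi(v)=0$, i.e. $v\in\mathcal{R}$. This contradicts condition (1).

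Consequently the wraith is determined by the following data: for each projective line of $\mathbb{F}_2^d\backslash\{0\}$, either "absent" or one of the $4$ sign patterns up to global sign. That is at most $5\le 2^3$ options per line. The number of lines is at most $(2^d)^2/6\le 2^{2d}$. This contributes at most $2^{3\cdot 2^{2d}}$ choices.

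\textbf{Step 3: the numerical comparison.} Multiplying the two counts gives $w_d\le 2^{2^d+3\cdot 2^{2d}}$. For $d\ge 2$ we have $2^d+3\cdot 4^d\le d+8^d$: at $d=2$ this reads $52\le 66$, and the right side grows faster afterwards. This yields $w_d\le 2^{d+2^{3d}}$.

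The case $d=1$ is handled directly. Here $n\le 1$ and there are no lines, so $\mathcal{R}=0$. Then $w_1\le 2\le 2^{1+8}$ (allowing $n=0$ or $n=1$).

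\textbf{Main obstacle.} The step needing real care is Step 2, namely the claim that distinct (non-opposite) line relations on the same triple cannot coexist. This is where the odd-torsion hypothesis in condition (3) must be combined with condition (1). The rest is routine counting, and the stated bound is clearly far from sharp.
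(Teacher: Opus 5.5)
Your argument is correct relative to the paper, but it follows a different route. Step 1 uses exactly the input the paper's proof uses: $n\le 2^d-1$, taken from the observation following the definition of isomorphism. Your Step 2 lemma correctly combines condition (1) with the absence of $2$-torsion in $\Gamma$.

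The paper never passes to projective data. For each $n\in[d,2^d-1]$ it uses only that $\mathcal{R}$ is determined by its set of norm-$3$ vectors. It bounds the number of centrally symmetric sets of norm-$3$ vectors of $\Z^n$ by $2^{4\binom{n}{3}}\le 2^{n^3}<2^{2^{3d}}$ and sums over fewer than $2^d$ values of $n$. It counts subgroups rather than isomorphism classes, and its counting step needs neither condition (1) nor sign uniqueness.

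Your encoding, $\mathcal{S}$ plus ``absent or one of four sign patterns'' per line, is the argument of the paper's \emph{next} theorem, where your Step 2 lemma is left as ``one shows easily''. So you effectively prove the sharper bound $2^{2^d-1}5^{\frac{1}{3}\binom{2^d-1}{2}}$ (exponent of order $4^d$ instead of $8^d$) and then weaken it. The price is the numerical comparison and a separate case $d=1$. The paper's count is cruder, but shorter and uniform in $d$.

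One caveat applies to the paper's proof exactly as much as to yours. Injectivity of $\overline{\pi}$ on $e_1,\dots,e_n$, and hence $n\le 2^d-1$, does not follow from conditions (1)--(3). Take $\mathcal{R}=\langle e_1+e_2-e_3,\ e_1-e_2-e_4\rangle\subset\Z^4$. It is the kernel of the surjection $\Z^4\to\Z^2$ sending $e_1,e_2,e_3,e_4$ to $(1,0),(0,1),(1,1),(1,-1)$. Every element $a(e_1+e_2-e_3)+b(e_1-e_2-e_4)$ has norm $3(a^2+b^2)$. So (1)--(3) hold with $d=2$, yet $n=4$ and $\overline{\pi}(e_3)=\overline{\pi}(e_4)$.

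This extends to every $n$. Send $e_1\mapsto(1,0)$ and $e_k\mapsto(k-2,1)$ for $k\ge 2$, and take the relations $e_k-e_{k-1}-e_1$. Triangular elimination shows these generate the kernel, and the images are pairwise distinct up to sign and nonzero. This gives $2$-dimensional wraiths on $\Z^n$ for every $n$, so under the literal Definition \ref{def1}, $w_2$ is infinite.

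The statement, and both proofs, therefore need injectivity of $\overline{\pi}$ on the basis as an extra hypothesis. That hypothesis holds for crystallographic wraiths by Theorem \ref{Venkov}. So Step 1, not Step 2, is the genuinely delicate point, contrary to your closing assessment. With that hypothesis added, your proof is complete.
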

\begin{proof}
We start by noting that, thanks to the previous Propositions, if $(\Z^n,\mathcal{R})$ is a $d$-dimensional wraith, then $n$ is an integer in $[d,2^d-1]$.\\
Now, given such a $n$, we want to find an upper bound on the number of centrally symmetric subsets of $\Z^n$ containing only elements of (squared Euclidean) norm 3. There are $4\binom{n}{3} \leq n^3$ opposite pairs of vectors of (squared Euclidean) norm 3 in $\Z^n$.\\
Thus, the number of sets of pairs of vectors of (squared Euclidean) norm 3 in $\Z^n$ is $2^{4\binom{n}{3}} \leq 2^{n^3}<2^{2^{3d}}$ since $n<2^d$.\\
Therefore, an upper bound for the number of $d$-dimensional wraiths is $\som{n=d}{2^d-1}{2^{2^{3d}}}<2^{d+2^{3d}}$.\\
\end{proof}

A small refinement of the previous proposition is given by the following theorem.
\begin{theorem}
We have $$w_d \le 2^{2^d-1} \cdot 5^{\frac{1}{3}\binom{2^d-1}{2}}.$$
\end{theorem}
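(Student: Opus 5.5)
The plan is to encode every $d$-dimensional wraith, up to isomorphism, by data living in the fixed projective space $P=\Gamma/2\Gamma\backslash\{0\}\cong \mathbb{F}_2^d\backslash\{0\}$, which has $N=2^d-1$ points. Then I count that data. The count has two factors: a choice of a subset of the $N$ points, giving at most $2^{N}$ options, and, for each of the $\frac13\binom{N}{2}$ lines of $P$, one of at most $5$ choices.

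\textbf{Step 1: from a wraith to projective data.} Let $(\Z^n,\mathcal{R})$ be a $d$-dimensional wraith. By Proposition \ref{p31}, $\Gamma/2\Gamma$ is a $d$-dimensional $\mathbb{F}_2$-space, and we fix an identification of it with $\mathbb{F}_2^d$. By the observation following the definition of $\overline{\pi}$, the basis vectors $e_1,\dots,e_n$ go to $n$ distinct points of $P$. This gives a subset $\mathcal{S}\subset P$, and we relabel the basis as $\{[s]\}_{s\in\mathcal{S}}$, which is an isometry of $\Z^n$. Changing the identification or the labelling changes the resulting data but not the isomorphism class of the wraith. So it suffices to bound the number of possible data $(\mathcal{S},\mathcal{R})$ with $\mathcal{S}\subset P$ and $\mathcal{R}\subset\Z\free{\mathcal{S}}$; there are at most $2^{N}$ choices of $\mathcal{S}$.

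\textbf{Step 2: $\mathcal{R}$ is determined by its line relations.} Every norm-$3$ element of $(\mathcal{R}\otimes_\Z\mathbb{Q})\cap\Z^n$ lies in $\mathcal{R}$, because these elements generate $\mathcal{R}$ by condition (2). Hence $\mathcal{R}$ is the subgroup generated by its line relations, that is, by its norm-$3$ elements. By Proposition \ref{p32}, the support $\{s_1,s_2,s_3\}$ of a line relation $\pm[s_1]\pm[s_2]\pm[s_3]$ is a projective line of $P$.

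\textbf{Step 3: at most one relation per line.} I claim that each line of $P$ carries at most one pair $\pm r$ of line relations. Suppose $r,r'$ have the same support but $r'\neq\pm r$. Then one of $r\pm r'$ equals $2[s_i]$ or $2([s_i]\pm[s_j])$; call it $2x$, so $x$ has norm $1$ or $2$ and $2x\in\mathcal{R}$. Thus $\pi(x)\in\Gamma$ satisfies $2\pi(x)=0$. Since $T$ has odd order, $\Gamma=\Z^d\times T$ has no element of order $2$, so $\pi(x)=0$. This puts $x\in\mathcal{R}$, contradicting condition (1).

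\textbf{Step 4: counting.} For each line of $P$ there are therefore at most $5$ options: either no relation, or one of the $4$ sign patterns $[s_1]\pm[s_2]\pm[s_3]$ up to global sign. The number of lines in $PG(d-1,2)$ is $\binom{N}{2}/3$, since each pair of points determines a unique line and each line contains $3$ pairs. Combining Steps 1–4 gives
$$w_d\le 2^{2^d-1}\cdot 5^{\frac13\binom{2^d-1}{2}}.$$

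\textbf{Main obstacle.} The delicate point is Step 3, which needs the odd-torsion hypothesis (3) together with condition (1) to rule out two independent sign patterns on the same line. Steps 1 and 2 are essentially bookkeeping about isomorphism classes and condition (2).
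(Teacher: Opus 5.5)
Your outline is the paper's own argument: choose $\mathcal{S}\subset PG(d-1,2)$, then allow at most five options per projective line. Your Step 3 is a valid proof of the paper's ``one shows easily''. The gap is Step 1, which is not bookkeeping. Conditions (1)--(3) of Definition \ref{def1} do \emph{not} force $\overline{\pi}(e_1),\dots,\overline{\pi}(e_n)$ to be distinct and nonzero. Condition (1) only excludes $e_i\in\mathcal{R}$ and $e_i\pm e_j\in\mathcal{R}$. Distinctness in $\Gamma/2\Gamma$ would instead need $e_i\notin\mathcal{R}+2\Z^n$ and $e_i\pm e_j\notin\mathcal{R}+2\Z^n$.

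Concretely, let $\mathcal{R}=\langle e_1+e_2-e_3,\ e_1-e_2-e_4\rangle\subset\Z^4$. It is the kernel of the surjection $\Z^4\to\Z^2$ sending $e_1,e_2,e_3,e_4$ to $(1,0),(0,1),(1,1),(1,-1)$. The two generators are orthogonal of norm $3$, so every element has norm $3(a^2+b^2)$ and the only norm-$3$ vectors are $\pm$ the generators. Also $\Gamma\cong\Z^2$, so this is a $2$-dimensional wraith. Yet $e_3-e_4\equiv 2e_2 \pmod{\mathcal{R}}$, so $\overline{\pi}(e_3)=\overline{\pi}(e_4)$ and $n=4>2^2-1$. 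Moreover, the two generators lie over the same projective line but have different supports. So your reduction in Step 3 from ``same line of $P$'' to ``same support in $\Z^n$'' also breaks.

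Worse, send the basis of $\Z^{m+2}$ to $(1,0),(0,1),(1,1),\dots,(m,1)$. For every $m$ this is a surjection onto $\Z^2$. Its kernel is generated by the $m$ norm-$3$ relations coming from $(k,1)=(k-1,1)+(1,0)$. It contains no vector of norm $1$ or $2$, since the images are pairwise non-proportional. Hence $w_2=\infty$ for Definition \ref{def1} read literally, and no counting argument can prove the bound.

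The paper's proof has exactly the same defect. It relies on the ``observation'' after the definition of $\overline{\pi}$, namely that (3) is equivalent to $\overline{\pi}$ sending the basis to $n$ distinct nonzero points, and on the reformulation $(\mathcal{S},\mathcal{R})$ built on it. That observation is false. So you inherited the error rather than introduced it, but it is the real obstacle, not Step 3. Your argument becomes correct once you add, as an extra axiom, that $\overline{\pi}$ is injective on $\{e_1,\dots,e_n\}$ with nonzero values. This holds automatically for crystallographic wraiths by Theorem \ref{Venkov}. With that hypothesis, Steps 2--4 go through exactly as you wrote them.
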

\label{th2}
\begin{proof}
To obtain this upper bound, we use the pairs given by geometric reformulation of the definition of $d$-dimensional wraiths above and we now proceed by a counting argument.\\
The first element $\mathcal{S}$ of the pair $(\mathcal{S},\mathcal{R})$ is a subset of the $(d-1)$-dimensional projective space over the field $\mathbb{F}_2$.\\
Since this projective space has $2^d-1$ elements, we bound the number of subsets associated to wraiths by the number $2^{2^d-1}$ of all subsets.
Then, once the first element $\mathcal{S}$ is fixed, the second element of the pair is a sublattice of $\Z\free{\mathcal{S}}$ generated by vectors of norm 3, which are associated to lines in the projective space, up to signs, with all three points contained in the subset $\mathcal{S}$.\\
Let us remark that one shows easily that a line in the projective space is associated to at most one pair of opposite line relations. Moreover, assuming that a line corresponds in fact to a pair of opposite line relations of a subset, there are four possible choices of signs for the coefficients of this pair of line relation.\\
Hence, for every projective line involving three elements of $\mathcal{S}$, we have to choose between five options for each line of the projective space : either we don't take any line relation associated, or we take a pair and we have 4 possible choices for signs, up to multiplication by $-1$.\\
Since there are $\frac{1}{3}{\binom{2^d-1}{2}}$ lines in the projective space of dimension $d-1$ over $\mathbb{F}_2$, this gives $5^{\frac{1}{3}\binom{2^d-1}{2}}$ possible subsets.
\end{proof}

The next section is an example of application of the framework developed in the last two sections to prove the classical result of finiteness of the number of perfect lattices.

\section{An application to perfect lattices}
\label{S4}
\leavevmode\\
In geometry of numbers, the notion of perfection was introduced by Voronoï for positive quadratic forms (\cite{Vor1}). We use the associated notion for Euclidean lattices.\\
A Euclidean lattice $(\Lambda,\scalar)$ is said to be \emph{perfect} if $\scalar$ is determined by its arithmetical minimum $\underset{v \in \Lambda \setminus \{0\}}{min}{\langle v , v \rangle}$ and its set of minimal vectors $\mathcal{M}_{\Lambda}$.\\
Equivalently, a lattice $(\Lambda,\scalar)$ is perfect if the set $\{x \otimes x,  x \in \mathcal{M}_{\Lambda}\}$ spans the $\binom{d+1}{2}$-dimensional space $Sym^2(\mathbb{E})$  of all symmetric elements in $\mathbb{E} \otimes \mathbb{E}$.\\
Such lattices are only considered up to similarity, defined as follows.
Two lattices $(\Lambda_1,\scalar_1)$ and $(\Lambda_2,\scalar_2)$ are \emph{similar} if there exists a surjective similarity $\mu : \Lambda_1 \longrightarrow \Lambda_2$ such that $\mu(\mathcal{M}_1)=\mathcal{M}_2$.\\
With the second definition of perfect lattices, one can also show that it is sufficient to prove that there exist $\mathcal{G}_1$ and $\mathcal{G}_2$ two subsets of minimal vectors of $\Lambda_1$ and $\Lambda_2$ respectively, such that $\{x \otimes x,  x \in \mathcal{G}_i\}$ spans  $Sym^2(\Lambda_i \otimes_{\Z} \R)$ and an isomorphism $\mu : \Lambda_1 \longrightarrow \Lambda_2$ such that $\mu(\mathcal{G}_1)=\mathcal{G}_2$.
The finiteness of the number of perfect lattices up to similarity (or equivalently the number of perfect forms up to equivalence and scalar multiplication) is a classical result of geometry of numbers. It has already been proved by Voronoï in \cite{Vor1}, and later by other authors such as Martinet (\cite{JM}), Schurmann (\cite{AS}), Bacher (\cite{Bac}), van Woerden (\cite{VW}) using other approaches.
Our goal is to prove this result using a point of view analogous to what has been explained in Section \ref{S2}.\\
To this end, notice that minimal vectors are facet vectors. This has indirectly been demonstrated by Voronoï in \cite{Vor1}. 
Now, we can prove the following Proposition.

\begin{proposition}
In a fixed dimension, there exist only finitely many perfect lattices up to similarity.
\end{proposition}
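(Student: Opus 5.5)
The plan is to attach to every perfect lattice $\Lambda$ of dimension $d$ a combinatorial datum chosen from a finite set, and then to show that this datum determines $\Lambda$ up to similarity. The datum is a \emph{marked crystallographic wraith}: the pair $(\mathcal{F},\mathcal{L}_{\mathcal{F}})$ together with the subset $\mathcal{M}\subset\mathcal{F}$ of favors $f$ such that $\pm f$ are minimal vectors. This makes sense because minimal vectors are facet vectors, as recalled just above the statement. So $\mathcal{M}_{\Lambda}=\pm\mathcal{M}$, and $\mathcal{M}$ is well defined up to the sign choice in $\mathcal{F}$. Two marked pairs are called isomorphic when there is an isomorphism $\rho$ of pairs in the sense of Section \ref{S2} that also satisfies $\rho(\pm\mathcal{M}_1)=\pm\mathcal{M}_2$.

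\textbf{Step 1 (finiteness of the data).} By the results of Section \ref{S3}, in particular Theorem \ref{th2}, there are at most $w_d$ crystallographic wraiths of dimension $d$. Each has at most $2^d-1$ favors, hence at most $2^{2^d-1}$ markings. The number of isomorphism classes of marked crystallographic wraiths is therefore at most $w_d\cdot 2^{2^d-1}$, which is finite.

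\textbf{Step 2 (an isomorphism of marked wraiths induces a group isomorphism).} Let $\Lambda_1,\Lambda_2$ be perfect and let $\rho:\Z\free{\mathcal{F}_1}\to\Z\free{\mathcal{F}_2}$ be an isomorphism of marked pairs.
\begin{itemize}
\item Since $\rho(\pm\mathcal{L}_{\mathcal{F}_1})=\pm\mathcal{L}_{\mathcal{F}_2}$, we get $\rho(\langle\mathcal{L}_{\mathcal{F}_1}\rangle)=\langle\mathcal{L}_{\mathcal{F}_2}\rangle$.
\item By Theorem \ref{exseq}, $\ker(\pi_i)=\langle\mathcal{L}_{\mathcal{F}_i}\rangle$ and $\pi_i$ is surjective.
\item Hence $\rho$ descends to a group isomorphism $\mu:\Lambda_1\simeq\Z\free{\mathcal{F}_1}/\langle\mathcal{L}_{\mathcal{F}_1}\rangle\to\Z\free{\mathcal{F}_2}/\langle\mathcal{L}_{\mathcal{F}_2}\rangle\simeq\Lambda_2$ with $\mu\circ\pi_1=\pi_2\circ\rho$.
\item Because $\rho$ maps $\pm\mathcal{M}_1$ onto $\pm\mathcal{M}_2$, we get $\mu(\mathcal{M}_{\Lambda_1})=\mathcal{M}_{\Lambda_2}$.
\end{itemize}

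\textbf{Step 3 (perfection forces $\mu$ to be a similarity).} Let $m_i$ be the minimum of $\Lambda_i$. Consider the pulled-back form $\langle\mu(\cdot),\mu(\cdot)\rangle_2$ on $\Lambda_1\otimes\R$, rescaled by $m_1/m_2$. It takes the value $m_1$ on every $x\in\mathcal{M}_{\Lambda_1}$, exactly as $\scalar_1$ does. The two forms therefore agree on all $x\otimes x$ with $x\in\mathcal{M}_{\Lambda_1}$. By perfection these elements span $Sym^2(\mathbb{E}_1)$, and a quadratic form is a linear functional on $Sym^2$. Hence the two forms coincide, so $\mu$ is a surjective similarity with $\mu(\mathcal{M}_1)=\mathcal{M}_2$. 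Combined with Step 1, this shows there are at most $w_d\cdot2^{2^d-1}$ perfect lattices up to similarity in dimension $d$.

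The main point needing care is Step 2. One must check that an isomorphism of pairs really induces a well-defined \emph{group} isomorphism of the lattices, not merely a bijection of the projective structures over $\mathbb{F}_2$. This is exactly where Theorem \ref{exseq} is essential: it says the line relations generate the whole kernel, so nothing beyond $\mathcal{L}_{\mathcal{F}}$ is needed to recover $\Lambda$. One must also check that sign choices are harmless, which holds because $\mathcal{M}_{\Lambda}$ is centrally symmetric and the marking is defined on $\pm\mathcal{F}$.
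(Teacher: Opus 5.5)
Your proof is correct and follows essentially the paper's route: you attach to $\Lambda$ its crystallographic wraith together with a marking by minimal vectors, use Theorem~\ref{exseq} to descend an isomorphism $\rho$ of marked pairs to a group isomorphism $\Lambda_1\to\Lambda_2$, and conclude from the finiteness of wraiths and of markings. The differences are minor. You mark all minimal vectors rather than a chosen subset $\mathcal{G}$ whose squares span $Sym^2(\mathbb{E})$, which gives the cruder marking count $2^{2^d-1}$ instead of $\binom{2^d-1}{\binom{d+1}{2}}$. You also spell out the $Sym^2$ argument showing that $\mu$ is a similarity, which the paper only asserts.
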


\begin{proof}
Let $\Lambda_1$ be a perfect lattice, with $\mathcal{F}_1$ a set of representatives of all pairs of facet vectors. We also denote $\mathcal{G}_1 \subset \mathcal{F}_1$ a set of minimal vectors such that $\{x \otimes x,  x \in \mathcal{G}_1\}$ spans  $Sym^2(\Lambda_1 \otimes_{\Z} \R)$. Using the notations of Section \ref{S2}, we associate to $\Lambda_1$ the triplet $(\mathcal{F}_1, \mathcal{L}_{\mathcal{F}_1},\mathcal{G}_1)$. We attach to these triplets a notion of equivalence analogous to the one developed in Section \ref{S2} :\\
Given two perfect lattices $\Lambda_1$ and $\Lambda_2$, we will say that their associated triplets $(\mathcal{F}_1, \mathcal{L}_{\mathcal{F}_1},\mathcal{G}_1)$ and $(\mathcal{F}_2, \mathcal{L}_{\mathcal{F}_2},\mathcal{G}_2)$ are \emph{isomorphic} if there exists an isomorphism $\rho : \Z\free{\mathcal{F}_1} \longrightarrow \Z\free{\mathcal{F}_2}$ such that :
\begin{itemize}
\item $\rho(\mathcal{F}_1)=\mathcal{F}_2$,
\item $\rho(\pm \mathcal{L}_{\mathcal{F}_1})=\pm \mathcal{L}_{\mathcal{F}_2}$,
\item $\rho(\pm \mathcal{G}_1)=\pm \mathcal{G}_2$.
\end{itemize}
Let us now fix $\Lambda_1$ and $\Lambda_2$ two perfect lattices with associated triplets $(\mathcal{F}_1, \mathcal{L}_{\mathcal{F}_1},\mathcal{G}_1)$ and $(\mathcal{F}_2, \mathcal{L}_{\mathcal{F}_2},\mathcal{G}_2)$ being equivalent, with $\rho : \Z \free{\mathcal{F}_1} \longrightarrow \Z \free{\mathcal{F}_2} $ the corresponding isomorphism.\\
Then $\pi_{\mathcal{F}_2} \circ \rho$ is surjective, with kernel $\ker(\pi_{\mathcal{F}_1})$. The first isomorphism theorem applied to $\pi_{\mathcal{F}_2} \circ \rho$ and $\pi_{\mathcal{F}_1}$ gives respectively an isomorphism $\overline{\pi_{\mathcal{F}_2} \circ \rho} : \Z \free{\mathcal{F}_1}/\ker(\pi_{\mathcal{F}_1}) \longrightarrow \Lambda_2$ and $\overline{\pi_{\mathcal{F}_1}}: \Z\free{\mathcal{F}_1}/\ker(\pi_{\mathcal{F}_1}) \longrightarrow \Lambda_1 $. Then, $\eta = \overline{\pi_{\mathcal{F}_2} \circ \rho} \circ \overline{\pi_{\mathcal{F}_1}}^{-1}$ is an isomorphism, verifying $\eta(\mathcal{G}_1)=\mathcal{G}_2$.\\
Thus, $\Lambda_1$ and $\Lambda_2$ are similar.\\
Now, let us demonstrate that the number of such triplets, up to equivalence, is finite.\\
The first two elements of these triplets can be seen as wraiths, and thus there exists a finite number of them up to isomorphism as demonstrated in Section \ref{S3}.\\
Finiteness for the third element follows from the fact that it is a subset of facet vectors corresponding to points of the $(d-1)$-dimensional projective space over $\mathbb{F}_2$.\\
\end{proof}

In addition to the proof of finiteness, Bacher and van Woerden found bounds for this number. With our method, we also obtain a bound, which is far from competing with the previous ones found, but has the advantage of simplicity.

\begin{corollary}
If $p_d$ denotes the number of perfect lattices up to similarity in dimension $d$, then
$$p_d\leq 2^{2^d-1} \cdot 5^{\frac{1}{3}\binom{2^d-1}{2}} \cdot \binom{2^d-1}{\binom{d+1}{2}}.$$
\end{corollary}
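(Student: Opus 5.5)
The plan is to combine the injectivity part of the preceding Proposition with the counting of Theorem~\ref{th2}. By the proof of the Proposition, the map sending a perfect lattice $\Lambda$ to the isomorphism class of its triplet $(\mathcal{F}, \mathcal{L}_{\mathcal{F}}, \mathcal{G})$ is injective on similarity classes. Isomorphic triplets give similar lattices, so
\[
p_d \le \#\{\text{isomorphism classes of triplets}\}.
\]
It therefore suffices to bound the number of possible triplets. We count the pair $(\mathcal{F},\mathcal{L}_{\mathcal{F}})$ first and then the choice of $\mathcal{G}$.

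For the pair, we use the geometric reformulation of Section~\ref{S3}. Via Theorem~\ref{Venkov}, $\mathcal{F}$ is identified with a subset $\overline{\mathcal{F}}$ of the projective space $\Lambda/2\Lambda\setminus\{0\}$, which has $2^d-1$ points. The relation lattice $\langle \mathcal{L}_{\mathcal{F}}\rangle$ is then determined by selecting, for each of the $\frac13\binom{2^d-1}{2}$ projective lines, either no line relation or one of the four sign patterns up to global sign. This is exactly the count in the proof of Theorem~\ref{th2}. It gives at most $2^{2^d-1}\cdot 5^{\frac13\binom{2^d-1}{2}}$ possibilities for the first two components, once we fix an identification of $\Lambda/2\Lambda$ with $\mathbb{F}_2^d$. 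Different identifications only produce isomorphic data, so this overcounts and is harmless for an upper bound.

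For the third component, $\mathcal{G}\subset\mathcal{F}$ consists of minimal vectors, which are facet vectors. Under the injection of Theorem~\ref{Venkov}, $\mathcal{G}$ becomes a subset of the $2^d-1$ projective points. The key reduction is that $\mathcal{G}$ can be taken to have exactly $\binom{d+1}{2}$ elements. Since $\{x\otimes x : x\in\mathcal{M}_\Lambda\}$ spans $Sym^2(\mathbb{E})$, which has dimension $\binom{d+1}{2}$, we can extract a basis. This gives a subset $\mathcal{G}$ of minimal vectors, one per opposite pair, of size exactly $\binom{d+1}{2}$ whose tensor squares still span. The remark preceding the Proposition guarantees that such a spanning subset suffices to detect similarity. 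Hence $\mathcal{G}$ ranges over at most $\binom{2^d-1}{\binom{d+1}{2}}$ subsets of the projective space, and multiplying the three counts gives the stated bound.

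The main obstacle is justifying that restricting to $|\mathcal{G}|=\binom{d+1}{2}$ loses nothing, i.e.\ that the equivalence argument of the Proposition only uses the spanning property of $\mathcal{G}$ and not the full set of minimal vectors. One must check that an isomorphism $\eta$ with $\eta(\mathcal{G}_1)=\mathcal{G}_2$ on these bases already forces a similarity. This holds because the quadratic form is determined, up to scaling, by its values on a spanning set of tensor squares, all of which equal the minimum. One also needs that choosing the same $\binom{d+1}{2}$ projective points in both lattices is compatible with $\rho$, which follows since $\rho$ induces a bijection of the projective structures.
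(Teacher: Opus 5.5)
Your proposal is correct and follows the paper's proof. You bound the pairs $(\mathcal{F},\mathcal{L}_{\mathcal{F}})$ by the wraith count $2^{2^d-1}\cdot 5^{\frac{1}{3}\binom{2^d-1}{2}}$, then extract from the tensor squares of minimal (hence facet) vectors a basis of $Sym^2(\mathbb{E})$, giving $|\mathcal{G}|=\binom{d+1}{2}$ and at most $\binom{2^d-1}{\binom{d+1}{2}}$ choices. Your explicit check that the similarity argument only uses the spanning property of $\mathcal{G}$ is exactly what the paper delegates to the remark preceding the Proposition.
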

\begin{proof}
To obtain this upper bound, we use the triplets associated to perfect lattices described previously and we now proceed by a counting argument.\\
Using Theorem \ref{th2}, we have an upper bound for the number of pairs $(\mathcal{F},\mathcal{L}_{\mathcal{F}})$ corresponding to the first and second elements of the triplet $(\mathcal{F},\mathcal{L}_{\mathcal{F}},\mathcal{G})$, this upper bound being $2^{2^d-1} \cdot 5^{\frac{1}{3}\binom{2^d-1}{2}}$.
For $\mathcal{G}$, the third element of the triplet, let us recall that it must verify that $\{x \otimes x,  x \in \mathcal{G}\}$ generates the $\binom{d+1}{2}$-dimensional vector space $Sym^2(\mathbb{E})$ of $\mathbb{E} \otimes \mathbb{E}$. Thus, this last set contains a basis $\{v_1 \otimes v_1,..., v_{\binom{d+1}{2}} \otimes v_{\binom{d+1}{2}} \}$ of $Sym^2(\mathbb{E})$. Hence, $\mathcal{G}'=\{v_1 ,..., v_{\binom{d+1}{2}}\} \subset \mathcal{G}$ verifies that  $\{x \otimes x,  x \in \mathcal{G}'\}$ generates $Sym^2(\mathbb{E})$. The number of such subsets can be then bounded by the number of subsets of $\binom{d+1}{2}$ elements among all minimal vectors which are in particular facet vectors.\\
Since the number of pairs of such vectors is bounded by $2^d-1$ and $\left(\displaystyle\binom{n}{\binom{d+1}{2}}\right)_n$ is an increasing sequence, the number of all such subsets is bounded by $\displaystyle\binom{2^d-1}{\binom{d+1}{2}}$.\\
\end{proof}

This proof was an application of our framework to perfect lattices. But our point of view is valid for all lattices, and admits several other applications. One of them, which will be detailled elsewhere, is an algorithm to find all crystallographic wraiths in a given dimension, allowing a combinatorial classification of lattices.

\section*{Acknowledgment}

I would like to sincerely thank my supervisor, Roland Bacher, for his valuable advice and his support throughout this project.
\newpage
\bibliographystyle{alpha}
\bibliography{BiblioThesis.bib}

\end{document}